\documentclass[12pt]{article}



\usepackage{amsthm,amsmath,amssymb}

\usepackage{graphicx}
 \usepackage{epsfig}
  \usepackage{latexsym, epsfig, psfrag,eepic,colordvi,bm}
  \usepackage{graphics,color}
  \usepackage{graphics}
  \usepackage{graphicx}
  \usepackage{epsfig}
  \usepackage[all]{xy}

  \usepackage{amssymb}
  \usepackage{amsthm,amsmath}

  \usepackage{latexsym, epsfig, psfrag,eepic,colordvi,bm}
  \usepackage{graphics,color}
  \usepackage{amsmath,amsfonts,amssymb,amscd}
  \usepackage[all]{xy}
  \usepackage{epstopdf}

  \usepackage{amsthm,amsmath,amssymb}

  \usepackage{graphicx}
  \usepackage{amsthm,amsmath,amssymb}

  \usepackage{graphicx,epsfig}
\usepackage[colorlinks=true,citecolor=black,linkcolor=black,urlcolor=blue]{hyperref}

\usepackage[all]{xy}
\usepackage{graphicx}
\usepackage{mathptmx}

\usepackage{latexsym, epsfig,cite, psfrag,eepic,colordvi,bm}
\usepackage{graphics,color,graphicx}
\usepackage[justification=centering]{caption}
\usepackage{amsmath,amsfonts,amssymb,amscd}
\usepackage[all]{xy}


\setlength{\textwidth}{6.3in}
\setlength{\textheight}{8.7in}
\setlength{\topmargin}{0pt}
\setlength{\headsep}{0pt}
\setlength{\headheight}{0pt}
\setlength{\oddsidemargin}{0pt}
\setlength{\evensidemargin}{0pt}

\theoremstyle{plain}
\newtheorem{theorem}{Theorem}[section]

\newtheorem{lemma}{Lemma}[section]

\theoremstyle{definition}

\newtheorem{example}{Example}[section]
\newtheorem{conjecture}[theorem]{Conjecture}

\theoremstyle{remark}

\newcommand{\bai}{\hspace{6pt}}

\date{}

\title{On products of permutations with the most uncontaminated cycles by designated labels}

\author{Ricky X. F. Chen~\footnote{ORCID: 0000-0003-1061-3049}\\
	\small School of Mathematics, Hefei University of Technology \\[-0.8ex]
	\small Hefei, Anhui 230601, P.~R.~China\\[-0.8ex]
	\small\tt xiaofengchen@hfut.edu.cn
}

\begin{document}

\maketitle

\begin{abstract}
	There is a growing interest in studying the distribution of certain labels in products of permutations
	since the work of Stanley addressing a conjecture of B\'{o}na.
	This paper is concerned with a problem in that direction.
	Let $D$ be a permutation on the set $[n]=\{1,2,\ldots, n\}$ and $E\subset [n]$.
	Suppose the maximum possible number of cycles uncontaminated by the $E$-labels in the product of $D$ and a cyclic permutation on $[n]$
	is $\theta$ (depending on $D$ and $E$).		 	 
	We prove that for arbitrary $D$ and $E$ with few exceptions, the number of cyclic permutations $\gamma$ such that $D\circ \gamma$ has exactly
	$\theta-1$ $E$-label free cycles is at least $1/2$ that of $\gamma$ for $D\circ \gamma$ to have $\theta$ $E$-label free cycles,
 where $1/2$ is best possible. An even more general result is also conjectured.

  \bigskip\noindent \textbf{Keywords:}  Factorization of a permutation, Permutation products, Cycle, Bijective function, Top connection coefficient 

  \noindent\small Mathematics Subject Classifications 2020: 05A05, 05E16
\end{abstract}

\section{Introduction}

Let $[n]=\{1,\dots,n\}$, and let $\mathfrak{S}_n$ denote the group of permutations on $[n]$.
A permutation $\pi \in \mathfrak{S}_n$ can be represented by a set of disjoint cycles.
The number of disjoint cycles of $\pi$ is denoted by $|\pi|$, and
the set (with repetition allowed) consisting of the lengths of these disjoint cycles is called the cycle-type of $\pi$.
A cycle of length $k$ is called a $k$-cycle.
A long cycle or cyclic permutation on $[n]$ is an $n$-cycle.

 There are enormous number of works on
studying decomposing a long cycle on $[n]$ into permutations on $[n]$ or products involving a long cycle in various contexts; see for
instance,
Chen and Reidys~\cite{chr1}, Chapuy, F\'{e}ray and Fusy~\cite{cff}, F\'{e}ray and Vassilieva~\cite{fv},
Goulden and Jackson~\cite{IJ2}, Goupil and Schaeffer~\cite{ag}, Jackson~\cite{jac},
Stanley~\cite{stan1}, Walsh and Lehman~\cite{walsh1}, Zagier~\cite{zag}.
In particular, a decomposition of a long cycle into
two permutations determines a one-face hypermap; if one of the said two permutations is
a fixed-point free involution, the decomposition determines a (rooted) one-face map.
The techniques vary widely, from bijective methods to the character theory approaches.

When studying products or factorizations of permutations, often features such as the number of cycles and cycle-type
that are irrelevant to the distribution of certain labels
are the main objects.
For example, the celebrated problem of enumerating rooted one-face maps of distinct genera is merely enumerating
cyclic permutations (up to a factor) whose product with a fixed permutation consisting of only
$2$-cycles yield permutations with distinct numbers of cycles.
Whether certain labels are contained in the same cycle or distinct cycles, or questions alike,
do not matter.  However, there is a growing interest in such questions recently.
The first work on this may be the one by Stanley~\cite{stan3} who aimed at confirming a conjecture
of B\'{o}na. The latter states that the probability for two given labels to be contained in distinct cycles of the product of two random long cycles is $1/2$ when $n$ is odd,
and has a surprising application in genome rearrangement problem concerning block-interchange distance~\cite{bona}. Subsequent studies of tracking label distribution include, for instance, Bernardi, Du, Morales and Stanley~\cite{separ}, B\'{o}na and Pittel~\cite{bona-pittel}, Chen~\cite{chen3},
F\'{e}ray and Rattan~\cite{fr}.
Here we add a new contribution to the subject.

	Let $D$ be a permutation on $[n]$ and $E\subset [n]$.
Suppose the maximum possible number of cycles uncontaminated by the labels from $E$ (i.e., $E$-label free) in the product of $D$ and a cyclic permutation on $[n]$
is $\theta(D,E)$.	
In this paper, our main result is:

\begin{theorem}\label{thm:track}
	For any $D \in \mathfrak{S}_n$ and $E \subset [n]$ with $|E|>1$ such that $D$ has either a $E$-label free cycle of length greater than one or a cycle mixing some labels from $E$ and $[n]\setminus E$, the number of cyclic permutations $\gamma$ such that $D\circ \gamma$ has exactly
	$\theta(D,E)-1$ $E$-label free cycles is at least $1/2$ that of $\gamma$ for $D\circ \gamma$ to have $\theta(D,E)$ $E$-label free cycles, where $1/2$ is best possible.

\end{theorem}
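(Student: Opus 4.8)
The plan is to set $F=[n]\setminus E$ and to note that a cycle of $D\circ\gamma$ is $E$-label free exactly when it is contained in $F$; writing $f_k$ for the number of long cycles $\gamma$ with $D\circ\gamma$ having exactly $k$ cycles inside $F$, the target is $f_{\theta-1}\ge\frac12 f_\theta$, where $\theta=\theta(D,E)=\max\{k:f_k>0\}$. I would first record, though it is not strictly needed, that $\theta=|F|-e_0$ with $e_0$ the number of cycles of $D$ contained in $F$ (equivalently $\theta=\sum_{C\subseteq F}(|C|-1)+\sum_{C\text{ mixed}}|C\cap F|$, the sum over cycles $C$ of $D$): the lower bound is obtained by completing the acyclic partial map $i\mapsto D^{-1}(i)$ on a suitable $\theta$-subset of $F$ to a long cycle $\gamma$, which makes $D\circ\gamma$ fix that subset, and the upper bound by an Euler count on the bipartite incidence graph of the cycles of $D$ and of $D\circ\gamma$. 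What is actually used below is only that the hypothesis on $D$ is equivalent to $\theta\ge1$ — so that $f_{\theta-1}$ is meaningful and the claim is non-vacuous — together with $|E|>1$.

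The engine of the proof is a $3$-cycle surgery. Fix two distinct elements $e,e'\in E$, available since $|E|>1$. For a long cycle $\gamma$ and an element $z\in F$ lying in an $E$-free cycle of $D\circ\gamma$, let $c$ be whichever of the two $3$-cycles on $\{e,e',z\}$ makes $\gamma_z:=\gamma c$ again a long cycle; exactly one does, namely the one whose cyclic orientation agrees with the cyclic order of $e,e',z$ along $\gamma$, the other splitting $\gamma$ into three cycles. I claim $D\circ\gamma_z=(D\circ\gamma)c$ has exactly one fewer $E$-free cycle than $D\circ\gamma$: writing $c$ as a product of two transpositions, one involves $z$ and so merges the $E$-free cycle of $D\circ\gamma$ through $z$ into an $E$-contaminated cycle, which lowers the count by one, while the other is $(e\,e')$ and, since $e,e'\in E$, either merges two $E$-contaminated cycles or splits an $E$-contaminated cycle into two cycles each still meeting $E$, so it leaves the count unchanged. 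Hence a $\gamma$ realising $\theta$ $E$-free cycles is sent to a $\gamma_z$ realising $\theta-1$.

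Let $A$ and $B$ be the sets of long cycles realising $\theta$ and $\theta-1$ $E$-free cycles, so $|A|=f_\theta$ and $|B|=f_{\theta-1}$, and let $R\subseteq A\times B$ be the set of pairs $(\gamma,\gamma')$ with $\gamma'=\gamma c$ for a $3$-cycle $c$ on some $\{e,e',z\}$, $z\in F$. The surgery shows every $\gamma\in A$ has at least one $R$-partner in $B$, so $|A|\le|R|$. The decisive step is the converse bound: every $\gamma'\in B$ has at most two $R$-partners in $A$, whence $|R|\le2|B|$ and therefore $f_\theta=|A|\le|R|\le2|B|=2f_{\theta-1}$. To prove it, observe that a partner of $\gamma'$ has the form $\gamma=\gamma'c^{-1}$ for a $3$-cycle $c$ on $\{e,e',z\}$, $z\in F$, and that requiring $\gamma$ to be a long cycle forces the orientation of $c$ from the position of $z$ relative to $e,e'$ along $\gamma'$; so $\gamma$ is a function of $z$, and one must bound the number of $z\in F$ for which this $\gamma$ lies in $A$. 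One shows such a $z$ must be the element at which the reverse surgery splits off a fresh $E$-free cycle, that this split can happen only at one of the two sites marked by $e$ and $e'$ inside the cycle of $D\circ\gamma'$ running through them, and that the maximality of $\theta$ removes all remaining freedom, leaving at most two admissible $z$.

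I expect this ``pinning down $z$'' step to be the main obstacle. It calls for a careful analysis of how the two transpositions composing $c^{-1}$ act on the single cycle of $D\circ\gamma'$ through $e,e',z$, for a determination of exactly which of the resulting configurations have $\theta$ (rather than fewer) $E$-free cycles, and for an appeal to maximality of $\theta$ to rule out spurious solutions; the bookkeeping grows when the $E$-free cycles of $D\circ\gamma$ are longer than fixed points and according to whether $e,e'$ share a cycle of $D\circ\gamma$. Finally, $\frac12$ cannot be improved: for $D=(3\,4)$ and $E=\{1,2\}$ on $[4]$ one has $\theta=1$, exactly four long cycles give a product with one $E$-free cycle and exactly two give a product with none, so the ratio is exactly $\frac12$, and the surgery above realises the extremal $2$-to-$1$ correspondence.
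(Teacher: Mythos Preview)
Your high–level strategy coincides with the paper's: both arguments pass from the ``$\theta$'' side to the ``$\theta-1$'' side by a three–point surgery involving two fixed $E$–labels and one $F$–label, and both aim for an ``at most $2$–to–$1$'' count. The paper first translates the problem into one about bijections $A\to B$ via two–row arrays (Theorem~\ref{thm:half}); your choice to stay in $\mathfrak{S}_n$ is harmless, since the translation is the easy part of the paper's argument.

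The substantive divergence, and the genuine gap in your proposal, is at the step you yourself flag. You work with a \emph{relation} $R\subseteq A\times B$ in which the $F$–point $z$ is arbitrary, and you need every $\gamma'\in B$ to have at most two $R$–partners. But each $\gamma\in A$ already contributes one $R$–pair for \emph{every} element $z$ lying in an $E$–free cycle of $D\circ\gamma$, so $|R|\ge\sum_{\gamma\in A}(\text{size of the }E\text{–free part of }D\circ\gamma)$; bounding the $B$–degree of $R$ by $2$ is therefore a strictly stronger statement than the theorem and cannot be waved through. Your sketch (``the split can happen only at one of the two sites marked by $e$ and $e'$ \ldots maximality removes the remaining freedom'') does not pin down $z$: for a fixed orientation the admissible $z$'s are exactly the elements of the maximal $F$–run immediately following the relevant $E$–label in (a conjugate of) $D\circ\gamma'$, and nothing you have written prevents several elements of that run from also lying in the required arc of $\gamma'$.

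The paper avoids this obstacle by replacing the relation with a \emph{function}: it fixes the $F$–point canonically as $m=$ the minimum label (with respect to a total order inherited from the long cycle $s$) appearing in any cycle of the vertical map. This canonical choice is precisely what makes the ``$\le 2$ preimages'' argument go through. Case~(i) forces $m$ to be the \emph{first} element to the right of $2$ smaller than $2$, hence unique; Case~(ii) is where maximality is really used, via a short contradiction (if two candidates $m_1>_{s'}m_2$ existed, a further diagonal transposition would produce a vertical with \emph{two} more components, violating that we are one below the maximum). That second claim hinges on comparing positions of candidates along the order $<_{s'}$, something only available because $m$ was chosen extremally. If you want to salvage your direct approach, the fix is to make an analogous canonical choice of $z$ (e.g.\ the $\gamma$–first element of an $E$–free cycle of $D\circ\gamma$) and then imitate the paper's two claims; without it the double count does not close.
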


For example, suppose $n=4$ and $D=(1 \bai 2)(3\bai 4)$. The products of $D$ with the six cyclic permutations on $[4]$ are
\begin{align*}
	(1)(2 \bai 4)(3), \, (1)(2\bai 3) (4) ,\, (1\bai 4 \bai 2 \bai 3),\,
	(1\bai 4)(2) (3), \, (1\bai 3 \bai 2 \bai 4), \, (1 \bai 3)(2)(4).
\end{align*}
If $E=\{1,2\}$, then $\theta(D,E)=1$ and there are four products with one $E$-label free cycle while two
with zero $E$-label free cycle;
If $E=\{1,3\}$, then $\theta(D,E)=2$ and there is one product with two $E$-label free cycles while three
with one $E$-label free cycle;
If $E=\{1,2,3\}$, then $\theta(D,E)=1$ and there are two products with one $E$-label free cycle while four
with zero $E$-label free cycle.

In view that $\theta(D,E)$ depends on the relative position of the $E$-lables in the cycles of $D$ and $D$ is an arbitrary permutation while $E$ could be a subset
of labels almost arbitrarily spreading in the cycles of $D$,
the result is quite general.
Our approach here may be of independent interest as well.
We prove a theorem about factorizing a general bijective function (instead of a permutation) at first,
and then briefly explain how to get to Theorem~\ref{thm:track}.


\section{Bijective functions and their factorizations}\label{sec2}
%
%
%

\subsection{Bijective functions}
Let two sets $A=\{a_1,a_2,\ldots, a_n\}$ and $B=\{b_1,b_2,\ldots, b_n\}$. Suppose $f: A\rightarrow B$ is
a bijective function (or bijection). The map $f$ is associated with a directed graph $G_f$ on $A\bigcup B$, where there is a directed edge from $x$ to $y$ if $f(x)=y$. The graph $G_f$ may have multiple disconnected components, and each component is either a directed cycle or a directed path.
In the following, by cycles and paths, we always mean directed cycles and directed paths, respectively.
The lemma below should not be hard to observe.

\begin{lemma}
	 The elements in a cycle must be contained in $A\bigcap B$; in a directed path, the starting end is contained in $A\setminus B$, the terminating end is contained in $B\setminus A$, while the (internal) elements other than the two ends are contained in $A\bigcap B$; each element in $B\setminus A$ must be the terminating end of a directed path so there are exactly $|B\setminus A|=|A\setminus B|$ directed paths in $G_f$.
	\end{lemma}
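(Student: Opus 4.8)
The plan is to track, for each vertex $v\in A\cup B$, its in-degree and out-degree in $G_f$, and then simply read off the structure. Since $f$ is a function defined on all of $A$, every $v\in A$ has out-degree exactly one in $G_f$; since $f$ is a bijection (in particular injective and surjective onto $B$), every $v\in B$ has in-degree exactly one. Hence a vertex in $A\cap B$ has both in-degree and out-degree one, a vertex in $A\setminus B$ has out-degree one and in-degree zero, and a vertex in $B\setminus A$ has in-degree one and out-degree zero. In particular, because $A\cup B$ is the entire vertex set, no vertex has both degrees zero: if $v\notin A$ then $v\in B$ and so has in-degree one, while if $v\notin B$ then $v\in A$ and so has out-degree one. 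Thus $G_f$ has no isolated vertex, and every component — which by assumption is a directed cycle or a directed path — carries at least one edge.

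Next I would classify the vertices inside a single component. If the component is a cycle, each of its vertices already has one incoming and one outgoing edge within the cycle, and the global bounds above force these to be its only incident edges; hence each such vertex lies in $A\cap B$. If the component is a path $v_0\to v_1\to\cdots\to v_k$, then $v_0$ has in-degree zero and out-degree one, so $v_0\in A\setminus B$; the terminal vertex $v_k$ has out-degree zero and in-degree one, so $v_k\in B\setminus A$; and each internal vertex $v_i$ with $0<i<k$ has in-degree and out-degree one, hence lies in $A\cap B$. This is exactly the first half of the statement.

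For the counting half, I would argue that a vertex $w\in B\setminus A$ has out-degree zero, so it cannot lie on a cycle and cannot be the start of, or an internal vertex of, a path; by the previous paragraph it must therefore be the terminating end of a path. Since distinct paths lie in distinct components and each path has exactly one terminating end, and since that end is always in $B\setminus A$, the map sending a path to its terminating end is a bijection from the set of paths of $G_f$ onto $B\setminus A$. Consequently the number of paths equals $|B\setminus A|$, and finally $|B\setminus A|=|B|-|A\cap B|=n-|A\cap B|=|A|-|A\cap B|=|A\setminus B|$ because $|A|=|B|=n$.

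The whole argument is a bookkeeping of degrees, so I do not anticipate a real obstacle. The only point needing a little care is to keep the \emph{global} degree bounds (in-degree and out-degree at most one over all of $G_f$, coming from $f$ being a well-defined bijection) conceptually separate from the \emph{local} degrees inside a component, and to observe that they coincide precisely because each component is already a path or a cycle; this coincidence is what forces the two ends of a path to land in $A\setminus B$ and $B\setminus A$ rather than in $A\cap B$, and hence makes the path-counting step work.
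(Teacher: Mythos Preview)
Your degree-counting argument is correct and complete. The paper itself does not supply a proof of this lemma at all---it is stated as something ``not hard to observe'' and left to the reader---so your write-up is precisely the routine verification the author had in mind, and there is nothing further to compare.
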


We use $|f|$ to denote the number of components in $G_f$.
The size of a cycle is the number of elements contained in the cycle; the
size of a path is one less than the number of elements contained in the path, that is
the number of elements in $A$ which are contained in the path.
The sizes of these disconnected components in $G_f$ determine the component-type of $f$ which
can be encoded into an ordered pair of non-increasing integer sequences $(\lambda, \mu)=(\lambda_1\lambda_2\ldots, \mu_1\mu_2\ldots)$, where $\sum_i \lambda_i +\sum_j \mu_j=n$,
meaning that there is exactly one path of size $\lambda_i$, and one cycle of size $\mu_j$ for all possible $i,j$.
We write $(\lambda, \mu) \models n$.
This concept of component-type is analogous to the cycle-type of a permutation
in symmetric groups.

\begin{example}
	Let $A=\{1,2,3,4\}$ and $B=\{2,3,5,6\}$. A map $f:A\rightarrow B$ is specified as follows:
	$$
	f(1)=6, \quad f(2)=3,\quad  f(3)=2, \quad f(4)=5.
	$$
	Then, the component-type of $f$ is $(11,2)$, as there are two directed paths $1 \rightarrow 6$
	and $4 \rightarrow 5$, as well as one directed cycle $(2 \bai 3)$.
	
\end{example}

For two sets $A$ and $B$ with $|A|=|B|=n$,
decomposing a bijection $g$
from $A$ to $B$ into a long cycle $s$ on $A$ and a bijection $f$ from $A$ to $B$, we arrive at
$g=f\circ s$.
This obviously includes decomposing a long cycle into permutations as a special case
and deserves a systematic study.

In a prior work, the author~\cite{chen1} proved an interpolation theorem on the attainable number
of components (i.e., directed cycles and directed paths) in $f$. Notably,
these attainable numbers do not have to have the same parity as that of permutations and the maximum number of components $f$
may have was proved to be $n-|g|+|A\setminus B|$ if $|A\setminus B|>1$, where $|g|$ denotes the number of components in $g$.
In contrast, the permutation case, that is, $|A\setminus B|=0$, the maximum
is known to be $n+1-|g|$.

The number of decompositions of a permutation $g=f \circ s$ where $f$ has $n+1-|g|$
components (actually cycles) is referred to as a top connection coefficient and
has been studied for instance in Goulden and Jackson~\cite{IJ1,IJ2} (see also B\'{e}dard and Goupil~\cite{bg92}).
When $g$ is a fixed point free involution, it is well known that
the corresponding top coefficient is essentially the famous Catalan number.
An analog in complex reflection groups, i.e., genus-$0$ connection coefficients,
has also been investigated by Lewis and Morales~\cite{lm21} recently.

Our problem here is related to the top decompositions of a general bijective function,
and
the forthcoming framework of two-row arrays will be helpful in many ways.


\subsection{Two-row arrays: a tool to study factorizations}

Let $\varPsi$ be a two-row array:
$$
\varPsi=\left(\begin{array}{ccccc}
s_0&s_1&\cdots &s_{n-2}&s_{n-1}\\
f(s_0)&f(s_1)&\cdots &f(s_{n-2}) &f(s_{n-1})
\end{array}\right),
$$
where the sequence $s_0s_1\cdots s_{n-1}$ is a permutation on $A$, while $f(s_0)f(s_1)\cdots f(s_{n-1})$ is a permutation on $B$. We say the two-row array $\varPsi$ is on $A$ and $B$, denoted by $[\varPsi: A\mid B]$. Let $s$ denote the cyclic permutation $(s_0\bai s_1\bai\cdots \bai s_{n-1})$ on $A$, and let $f$ be the (vertical) map
$$
f: A\rightarrow B,\bai s_i\mapsto f(s_i), \bai 0\leq i \leq n-1.
$$
In addition, let $D_{\varPsi}$ be the map
$$
D_{\varPsi}: B\rightarrow A, \bai f(s_i)\mapsto s_{i+1}, \bai 0\leq i \leq n-1,
$$
where $s_n$ is treated as $s_0$. The map $D_{\varPsi}$ is called the diagonal of $\varPsi$. Then, the following lemma is clear but crutial.

\begin{lemma}\label{lem:triple}
The upper horizontal $s$, the vertical and the diagonal satisfy
	$
	s=D_{\varPsi}\circ f.
	$
	\end{lemma}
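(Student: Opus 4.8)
The statement is a direct consequence of unwinding the three definitions attached to $\varPsi$, so the plan is simply to evaluate both sides on a generic element of $A$. Since the top row $s_0 s_1 \cdots s_{n-1}$ is a permutation of $A$, every element of $A$ occurs exactly once among the $s_i$, and it therefore suffices to verify that $s$ and $D_{\varPsi}\circ f$ agree at each $s_i$ for $0 \le i \le n-1$.

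First I would confirm that all three maps are well defined on the stated domains, paying particular attention to the diagonal. The bottom row $f(s_0) f(s_1) \cdots f(s_{n-1})$ is by hypothesis a permutation of $B$, so each element of $B$ occurs exactly once as some $f(s_i)$; this is precisely what makes the assignment $f(s_i)\mapsto s_{i+1}$ a genuine function $D_{\varPsi}:B\rightarrow A$ rather than a multivalued relation, and since the targets $s_{i+1}$ again exhaust $A$, the diagonal $D_{\varPsi}$ is in fact a bijection. Here the index is read cyclically, with $s_n$ treated as $s_0$, matching the cyclic permutation $s=(s_0\bai s_1\bai\cdots\bai s_{n-1})$, for which $s(s_i)=s_{i+1}$ directly from the definition. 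Note also that the codomain of $f$ equals the domain of $D_{\varPsi}$, so the composite $D_{\varPsi}\circ f$ is a well-formed map $A\rightarrow A$, as is $s$.

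The verification itself is then immediate: for each $i$ we compute
$$
(D_{\varPsi}\circ f)(s_i)=D_{\varPsi}\bigl(f(s_i)\bigr)=s_{i+1}=s(s_i),
$$
where the middle equality is the defining rule of $D_{\varPsi}$ and the last is the defining rule of $s$. Since the two functions $D_{\varPsi}\circ f$ and $s$ agree on every $s_i$, and these elements range over all of $A$, the two maps coincide as functions on $A$, giving $s=D_{\varPsi}\circ f$. There is no genuine obstacle in this argument beyond bookkeeping; the only point warranting care is the well-definedness of the diagonal, which is exactly why the hypothesis that the bottom row be a permutation of $B$ (equivalently, that $f$ be a bijection) is what the construction rests on.
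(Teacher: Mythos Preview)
Your verification is correct and is exactly the direct unwinding of definitions that the paper has in mind; the paper itself omits the proof entirely, merely declaring the lemma ``clear but crutial.'' There is nothing to add beyond noting that your care about the well-definedness of $D_{\varPsi}$ is appropriate and that the argument is complete as written.
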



We also employ $\varPsi=(s,f)$ as a shorthand for the explicit two-row array representation if there is no
confusion about which is the left-most element $s_0\in A$.
The idea of interpreting two-row arrays in such a way follows from plane permutations~\cite{chen3,chr1}.

Let $\mathfrak{T}_k(D,x)$ be the set of two-row arrays $\varPsi=(s,f)$ on the set $A$ and $B$ such that the left-most
element on the upper horizontal is the fixed element $x\in A$, the diagonal is the fixed one-to-one map $D:B\rightarrow A$ and there are $k$ components in the vertical map.
Based on Lemma~\ref{lem:triple}, the set $\mathfrak{T}_k(D,x)$ corresponds to the set of ways of decomposing
the fixed map $D$ into a long cycle on $A$ (i.e., $s$) and a map from $B$ to $A$ (i.e., $f^{-1}$) with
$k$ components, i.e., $D=s\circ f^{-1}$.
For this reason, the two concepts, two-row arrays and decompositions (essentially triples of
bijections) will be used interchangeably, whichever is more convenient.

By a relabelling argument, it is clear to see that
$|\mathfrak{T}_k(D_1,x)|=|\mathfrak{T}_k(D_2,y)|$
as long as $D_1, D_2: B\rightarrow A$ have the same component-type $(\lambda,\mu)$.
For this reason, we define the number
$$
W_k^{\lambda,\mu}:=|\mathfrak{T}_k(D_1,x)|.
$$

Given $\varPsi=(s,f)$ and a sequence $h=(i,j,k)$, such that $i\leq j<k$
and $\{i,j,k\}\subset [n-1]$, if we transpose the two diagonal-blocks determined by the continuous segments $[s_i,s_j]$ and $[s_{j+1},s_k]$, we obtain a new two-row array $\varPsi^h=(s^h,f^h)$:

\begin{eqnarray*}
\left(
\vcenter{\xymatrix@C=0pc@R=1pc{
\cdots & s_{i-1}  & s_{j+1}\ar@{--}[dl] &\cdots & s_{k-1}&\bai s_k\ar@{--}[dl] \bai\bai& s_i\ar@{-}[dl] &\cdots & s_{j-1} & s_{j}\ar@{-}[dl] & s_{k+1}  &\cdots\\
\cdots  & f(s_{j}) & f(s_{j+1}) & \cdots & f(s_{k-1})& f(s_{i-1}) & f(s_i) & \cdots\hspace{-0.5ex} & f(s_{j-1}) & f(s_{k})  & f(s_{k+1})&\cdots
}}
\right).
\end{eqnarray*}
Obviously, $\varPsi$ and $\varPsi^h$ have the same diagonal; and the maps $f$ and $f^h$ only differ at the images of the elements $s_{i-1}$, $s_j$, and $s_k$.
The latter implies that all components other than those containing the mentioned three elements of $f$ (i.e., $G_f$) will be completely carried over to $f^h$ without any changes.
For those components containing the three elements, the three elements to some extent serve as breakpoints, where
the induced segments will be re-pasted in a certain way, depending on the distribution of the elements $s_{i-1}$, $s_j$, and $s_k$ in the components of $f$.
The following two particular cases will be used later.

\begin{lemma}\label{lem:3-to-2}
If in $f$, the elements $s_{i-1}$, $s_j$, and $s_k$ are contained in two paths and one cycle, then after the diagonal transposition, the three components will become two components.
\end{lemma}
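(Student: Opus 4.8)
We sketch a proof. The plan is to reduce the statement to two elementary edge-swap moves on the digraph $G_f$, keeping track only of how the three distinguished components recombine. From the displayed array defining $\varPsi^h$ one reads off $f^h(s_{i-1})=f(s_j)$, $f^h(s_j)=f(s_k)$, $f^h(s_k)=f(s_{i-1})$, and $f^h$ agrees with $f$ elsewhere; equivalently $f^h=f\circ\tau$, where $\tau=(s_{i-1}\,s_j\,s_k)$ is the $3$-cycle of $A$ with $s_{i-1}\mapsto s_j\mapsto s_k\mapsto s_{i-1}$. Hence $G_{f^h}$ is obtained from $G_f$ by cyclically redirecting the unique out-edges of the three breakpoint vertices $s_{i-1},s_j,s_k$, and every component of $G_f$ containing none of these three is reproduced verbatim in $G_{f^h}$.

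The second ingredient is the effect of one transposition. If $g\colon A\to B$ is a bijection and $x,y\in A$, then $g\circ(x\,y)$ is the bijection obtained from $g$ by interchanging the targets $g(x)$ and $g(y)$; since this changes no in-degree or out-degree, the vertex sets $A\setminus B$ and $B\setminus A$ are untouched, so path-components remain path-components and no stray cycle is created out of a path. Tracing the affected part, the two cases that will occur are: (i) if $x$ lies on a cycle and $y$ on a path, that cycle and that path coalesce into a single path through both $x$ and $y$, decreasing the component count by $1$; (ii) if $x$ and $y$ lie on two distinct paths, the result is again two paths (a crossed recombination, one running through $x$, the other through $y$) and the component count is unchanged. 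I would check these, including the degenerate short cycles and length-one paths.

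Now use the hypothesis. Among $s_{i-1},s_j,s_k$ exactly one, say $z$, lies on the cycle $C$, and the other two, say $u$ on a path $P$ and $v$ on a path $Q$, lie on distinct paths $P\neq Q$. Restricted to $\{u,v,z\}$ the $3$-cycle $\tau$ is either $(u\,v\,z)$ or $(u\,z\,v)$, and in each case it factors as $\tau=\sigma_1\circ(u\,v)$ with $\sigma_1\in\{(u\,z),(v\,z)\}$ --- a path--cycle transposition on the left and the path--path transposition $(u\,v)$ on the right. Thus $f^h=(f\circ\sigma_1)\circ(u\,v)$. Performing $\sigma_1$ first, case (i) fuses $C$ with one of $P,Q$ into a single path, so that the three components $P,Q,C$ become two paths; performing $(u\,v)$ next, these two paths (on which $u$ and $v$ now lie separately) recombine by case (ii) into two paths again. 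Since no other component of $G_f$ is ever disturbed, $G_{f^h}$ has exactly two components where $G_f$ had the three, which is the assertion.

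The one genuine subtlety is that a single transposition need not change the component count: it is preserved precisely when both swapped elements lie on distinct paths. So no parity or $\pm1$ shortcut is available, and the crux is to split $\tau$ so that the cycle is absorbed by the first transposition while the second is of the harmless path--path type --- then to verify that such a splitting always exists and that, after the fusion, the two remaining breakpoints indeed lie on two distinct paths. That bookkeeping aside, everything is a routine inspection of how directed paths and cycles are cut and pasted.
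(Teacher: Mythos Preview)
Your argument is correct and gives a uniform treatment of all placements of the cycle element. The paper, by contrast, proceeds by direct inspection: it writes out one representative case (with $s_{i-1}$ and $s_j$ on the two paths and $s_k$ on the cycle), displays the three components explicitly, and checks that the rewiring $f(s_{i-1})\leftrightarrow f(s_j)\leftrightarrow f(s_k)$ splices them into exactly two paths, then declares the remaining cases analogous. Your route --- recognising $f^h=f\circ(s_{i-1}\,s_j\,s_k)$ and factoring the $3$-cycle as a path--cycle swap followed by a path--path swap --- trades the case split for two general single-transposition observations, and makes transparent \emph{why} the component count drops by exactly one: the first swap absorbs the cycle into a path (cost one component), the second is the neutral path--path crossover. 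The paper's approach is shorter for this particular lemma but would need the splice picture redrawn for each of the three cases; yours carries more setup but handles all three at once. One small simplification you could make: since the labels $u$ and $v$ are interchangeable, you may assume without loss of generality that $\tau=(u\,v\,z)$ and dispense with the second factorisation.
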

\proof We just show one case, the rest of cases can be checked analogously.
Assume the three components are:
$$
\cdots_{il} \rightarrow s_{i-1}\rightarrow \cdots_{ir}, \qquad \cdots_{jl} \rightarrow s_{j}\rightarrow \cdots_{jr}, \qquad (s_{k}\bai \cdots_k).
$$
Then after the transposition, they will reorganize into the following two components:
$$
\cdots_{il}\rightarrow s_{i-1}\rightarrow \cdots_{jr}, \quad \cdots_{jl}\rightarrow s_j\rightarrow \cdots_k \rightarrow s_k \rightarrow \cdots_{ir},
$$
completing the proof. \qed

Analogously, we obtain the following lemma whose proof is left to the interested reader.
\begin{lemma}\label{lem:split-path}
If in $f$, the elements $s_{i-1}$, $s_j$, and $s_k$ are in the same path of the form
	$$
	\cdots_i \rightarrow s_{i-1}\rightarrow v_1^i \rightarrow \cdots\rightarrow v_{m_i}^i \rightarrow s_k \rightarrow v_1^k \rightarrow \cdots \rightarrow v_{m_k}^k \rightarrow s_j  \rightarrow v_1^j \rightarrow \cdots \rightarrow v_{m_j}^j\rightarrow b_t,
	$$
then in $f^h$, the path splits into the following three components:
	$$
		\cdots_i \rightarrow s_{i-1}\rightarrow v_1^j \rightarrow \cdots \rightarrow v_{m_j}^j\rightarrow b_t, \qquad  (s_j \bai v_1^k \bai \ldots \bai v_{m_k}^k),\qquad  (s_k \bai v_1^i \bai\ldots \bai v_{m_i}^i).
	$$
\end{lemma}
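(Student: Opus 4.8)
The statement to prove is \emph{local}: the diagonal transposition $\varPsi\mapsto\varPsi^h$ determined by $h=(i,j,k)$ alters $f$ only at the images of the three breakpoints $s_{i-1}$, $s_j$, and $s_k$, so I only need to trace what happens to the single path that contains all three of them. My plan is to reason directly from the definition of the diagonal $D_\varPsi$ together with the fact that $\varPsi$ and $\varPsi^h$ share the same diagonal $D$. The key observation is that the edges of $G_f$ are exactly the arrows $s_t\mapsto f(s_t)$ followed implicitly by $D\colon f(s_t)\mapsto s_{t+1}$, so the components of $f$ record the trajectory of the permutation $s^{-1}$ (equivalently, $D\circ f$) read along the array; a transposition of the two diagonal-blocks $[s_i,s_j]$ and $[s_{j+1},s_k]$ reroutes precisely the three ``seams'' at $s_{i-1}$, $s_j$, $s_k$.

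First I would record, from the explicit two-row picture of $\varPsi^h$ already displayed in the excerpt, the three image reassignments that the transposition induces: the element $s_{i-1}$ (whose old out-neighbour in $f$ was $f(s_{i-1})$) is paired in $f^h$ with $f(s_j)$; the element $s_j$ is paired with $f(s_k)$; and $s_k$ is paired with $f(s_{i-1})$. In other words, on the three breakpoints the map $f$ is post-composed with the 3-cycle sending $f(s_{i-1})\mapsto f(s_j)\mapsto f(s_k)\mapsto f(s_{i-1})$ on their images, while every other vertical arrow is untouched. This is the same mechanism used in Lemma~\ref{lem:3-to-2}; here I apply it to the configuration in which $s_{i-1}$, $s_j$, $s_k$ all lie on one path.

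Next I would label the given path as in the statement, writing its $f$-arrows as the chain
\[
\cdots_i \rightarrow s_{i-1}\rightarrow v_1^i \rightarrow \cdots\rightarrow v_{m_i}^i \rightarrow s_k \rightarrow v_1^k \rightarrow \cdots \rightarrow v_{m_k}^k \rightarrow s_j  \rightarrow v_1^j \rightarrow \cdots \rightarrow v_{m_j}^j\rightarrow b_t,
\]
and then re-route only the three breakpoint arrows according to the 3-cycle above. Tracing the new successor relation: starting from the path-origin we follow $\cdots_i\rightarrow s_{i-1}$, but $s_{i-1}$ now points into the block that formerly began at $s_j$, giving $s_{i-1}\rightarrow v_1^j\rightarrow\cdots\rightarrow v_{m_j}^j\rightarrow b_t$, so the old path-terminus $b_t$ is reattached here, yielding the first (path) component. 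The remaining vertices split off into two closed orbits because $s_j$ now points into the block beginning at $s_k$ and $s_k$ points into the block beginning at the former $s_{i-1}$-block: this produces the cycle $(s_j\ v_1^k\ \cdots\ v_{m_k}^k)$ and the cycle $(s_k\ v_1^i\ \cdots\ v_{m_i}^i)$. That these two orbits genuinely close up into cycles (rather than re-merging) is exactly what the 3-cycle on the three seam-images forces, since the only surviving links out of $v_{m_k}^k$ and $v_{m_i}^i$ are the rerouted ones back to $s_j$ and $s_k$ respectively. This reproduces precisely the three components claimed in Lemma~\ref{lem:split-path}.

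\textbf{Main obstacle.} The single delicate point is the bookkeeping of how the internal vertices $v^\bullet_\bullet$ get reassigned to the three new components \emph{without} overlaps or omissions, i.e.\ verifying that the rerouting of just the three seam-arrows partitions the original vertex set of the path exactly into the one path plus two cycles. I expect this to be a careful but routine successor-chasing argument; the cleanest way to make it airtight is to observe that $f^h = c\circ f$ where $c$ is the 3-cycle on $\{f(s_{i-1}),f(s_j),f(s_k)\}$, and that composing a linear chain with such a 3-cycle at three specified positions deterministically fissions it into the stated pieces. Beyond this, the lemma follows by the same ``break-and-repaste'' principle already established in Lemma~\ref{lem:3-to-2}, merely specialized to the all-in-one-path case, so no new ideas are needed.
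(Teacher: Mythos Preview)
Your proposal is correct and follows exactly the approach the paper intends: the paper omits the proof entirely, saying only that it is obtained ``analogously'' to Lemma~\ref{lem:3-to-2} and is ``left to the interested reader,'' i.e.\ by reading off from the displayed $\varPsi^h$ that $f^h(s_{i-1})=f(s_j)$, $f^h(s_j)=f(s_k)$, $f^h(s_k)=f(s_{i-1})$ and then chasing successors along the given path. Your formulation via $f^h=c\circ f$ with $c=(f(s_{i-1})\ f(s_j)\ f(s_k))$ is a tidy way to package this same computation.
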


\section{Proof of Theorem~\ref{thm:track}}

Now we are in a position to present an equivalent result of Theorem~\ref{thm:track}.

\begin{theorem}\label{thm:half}
For any $(\lambda,\mu)\models n$ with $2\leq \ell(\lambda)<n-\ell(\mu)$, we have
\begin{align}
W_{n-\ell(\mu)-1}^{\lambda,\mu}\geq \frac{1}{2} W_{n-\ell(\mu)}^{\lambda,\mu},
\end{align}
where $1/2$ is best possible.
\end{theorem}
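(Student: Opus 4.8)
The plan is to establish the inequality $W_{n-\ell(\mu)-1}^{\lambda,\mu}\geq \tfrac12 W_{n-\ell(\mu)}^{\lambda,\mu}$ by constructing an explicit, at-most-two-to-one map from the set of ``top'' two-row arrays (those whose vertical $f$ realizes the maximum number $\theta:=n-\ell(\mu)$ of components) to the set of ``second-from-top'' two-row arrays (those with $\theta-1$ components). Fix a representative diagonal $D:B\to A$ of component-type $(\lambda,\mu)$ and a base point $x\in A$. Given $\varPsi=(s,f)\in\mathfrak{T}_{\theta}(D,x)$, I would first understand the structure that is forced by maximality: by the interpolation/maximum-count result recalled in the excerpt, $|f|=n-|g|+|A\setminus B|$ with $g=D_\varPsi\circ f = s$ a long cycle, so the components of $f$ are as ``small as possible'' — in the top case one expects $f$ to have exactly $\ell(\lambda)$ paths of size $1$ (one per label of $A\setminus B$) and the cycle structure forced to break into the maximal number of fixed points / small cycles. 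The key move is then to pick a canonical triple of indices $h=(i,j,k)$ attached to $\varPsi$ and apply the diagonal transposition $\varPsi\mapsto\varPsi^h$ from Section~\ref{sec2}: by Lemma~\ref{lem:3-to-2}, if the three breakpoint elements $s_{i-1},s_j,s_k$ lie in two paths and one cycle of $f$, the transposition merges three components into two, so $|f^h|=|f|-1=\theta-1$ and $\varPsi^h\in\mathfrak{T}_{\theta-1}(D,x)$, as desired. (The hypothesis $2\le\ell(\lambda)$ guarantees $f$ has at least two paths available to serve as breakpoint-hosts, and $\ell(\lambda)<n-\ell(\mu)$, i.e. $\ell(\lambda)<\theta$, guarantees $f$ in the top case genuinely has a component that is not one of the forced size-$1$ paths, so there is ``room'' for the merge; these are exactly the two inequalities in the statement.)

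The heart of the argument is to define the canonical choice of $h$ so that (a) the map $\varPsi\mapsto\varPsi^h$ lands in $\mathfrak{T}_{\theta-1}(D,x)$, and (b) it is at most $2$-to-$1$. For (b) the natural strategy is to show that from $\varPsi^h$ together with a bounded amount of side information — say, one bit, recording which of two symmetric reconstructions to perform — one can invert the construction. Concretely, a diagonal transposition is an involution up to relabelling of the index-triple, so applying a suitable transposition to $\varPsi^h$ recovers $\varPsi$; the ambiguity is precisely the choice among the (at most two) ways to split the merged pair of components of $f^h$ back into the three components of $f$ while respecting that the diagonal $D$ and base point $x$ are pinned down. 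I would make this precise by reading off, from the cyclic word $s^h$ and the map $f^h$, the location of the ``scar'' left by the paste operation in Lemma~\ref{lem:3-to-2} (the new path $\cdots_{il}\to s_{i-1}\to\cdots_{jr}$ and the new path $\cdots_{jl}\to s_j\to\cdots_k\to s_k\to\cdots_{ir}$ carry enough combinatorial data — the positions of $s_{i-1}, s_j, s_k$ — to determine $i,j,k$ up to the stated two-fold symmetry).

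The main obstacle I anticipate is twofold. First, one must verify that the canonical index-triple can always be chosen so that the three breakpoint elements fall into the ``two paths + one cycle'' pattern of Lemma~\ref{lem:3-to-2}, rather than, say, the ``all in one path'' pattern of Lemma~\ref{lem:split-path} (which would increase, not decrease, the component count) or a degenerate coincidence of indices; this is where the structural analysis of top arrays and the hypotheses $2\le\ell(\lambda)<n-\ell(\mu)$ must be used carefully, and it may require a small case analysis on how the $E$-analogue — here, the path-labels — sit relative to the cycles. Second, pinning down the exact $2$-to-$1$ bound (rather than some cruder $O(1)$-to-$1$ bound) requires showing the preimage of a given $\varPsi'\in\mathfrak{T}_{\theta-1}(D,x)$ under the construction has size at most $2$, which I would handle by exhibiting an explicit reconstruction algorithm with a single binary choice and arguing no two distinct top arrays with the same binary label map to the same $\varPsi'$. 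Finally, sharpness of the constant $1/2$ is separate and comparatively easy: I would exhibit a single small family of $(\lambda,\mu)$ (for instance $\mu$ a single long cycle and $\lambda$ two parts, mirroring the $n=4$, $D=(1\,2)(3\,4)$, $E=\{1,3\}$ example in the introduction, where the counts are $1$ and $3$ — or rather a family where they are $m$ and $2m$) on which equality holds, certifying that the factor cannot be improved.
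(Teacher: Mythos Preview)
Your overall strategy coincides with the paper's: build an at-most-two-to-one map $\mathfrak{T}_\theta(D,x)\to\mathfrak{T}_{\theta-1}(D,x)$ via a diagonal transposition that, by Lemma~\ref{lem:3-to-2}, merges two paths and one cycle of $f$ into two components. Two points, however, need attention. First, your description of the top case is inaccurate: $f$ always has exactly $\ell(\lambda)$ paths (one per element of $A\setminus B$) regardless of $|f|$, but nothing forces these paths to have size~$1$; in the paper's own sharpness example one top-case path has size~$2$. This does not damage the plan---all you need is two paths and one cycle, which $2\le\ell(\lambda)$ and $\ell(\lambda)<\theta$ guarantee---but the ``size-$1$ paths plus many fixed points'' picture is the wrong mental model and may mislead you when you try to invert.

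Second, and this is the real gap, your inversion idea (``read off the scar, i.e., the positions of $s_{i-1},s_j,s_k$, from the merged paths'') will not work as stated: after the merge there is no structural marker in $G_{f^h}$ singling out the former cycle-element among the internal vertices of the merged path. The paper's canonical choice is specific: take $x=a_1=1$, $a_2=2$ (so $1,2\in A\setminus B$ are forced to be starting ends of two distinct paths of $f$, giving two of the three breakpoints for free), and let the third breakpoint be the element $m$ that is \emph{minimal in the order $<_s$} among all elements lying in cycles of $f$. The binary ambiguity is then exactly the dichotomy $1<_s 2<_s m$ versus $1<_s m<_s 2$, and recovering $m$ from $\varPsi'=(s',f')$ uses the cyclic order $<_{s'}$, not the graph $G_{f'}$ alone (for instance, in case~(i), $m$ is the first element along the path out of $2$ that is $<_{s'}2$). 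The step you have not anticipated is uniqueness in case~(ii): the paper argues by contradiction that two admissible candidates $m_1,m_2$ would produce, via Lemma~\ref{lem:split-path}, a transposition on $\varPsi'$ raising the component count by~$2$, which is impossible since $|f'|=\theta-1$ is already one below the maximum. This appeal to the ``one-below-top'' position is the crux of the exact $2$-to-$1$ bound and is invisible to a local scar-reading argument. (Incidentally, the introduction's $E=\{1,3\}$ example you cite has ratio $1{:}3$, not $1{:}2$; the paper uses a different $n=4$ instance with $\lambda=(1,1)$, $\mu=(2)$ realizing $2{:}4$.)
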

\proof
Let $E=\{a_1,a_2,\ldots, a_{\ell(\lambda)}\}\subset [n]$ and $V=\{b_1,b_2,\ldots,b_{\ell(\lambda)}\}$ with $b_i \notin [n]$
for $1\leq i \leq \ell(\lambda)$,
and let $A=[n]$ and $B=\left([n]\setminus E\right)\bigcup V$.
Suppose the component-type of $D:B\rightarrow A$ is $(\lambda,\mu)$.
We prove the theorem by constructing a map $\phi$ from the set $\mathfrak{T}_{n-\ell(\mu)}(D,1)$
to the set $\mathfrak{T}_{n-\ell(\mu)-1}(D,1)$, where any $\varPsi$ in the latter set has at most $2$ preimages.

Without loss of generality, assume $a_1=1$ and $a_2=2$. For a $\varPsi=(s,f)\in \mathfrak{T}_{n-\ell(\mu)}(D,1)$,
there exists at least one cycle since $n-\ell(\mu)>\ell(\lambda)$.
Pick the paths containing $1$ and $2$, and the cycle
containing the minimum $m$ over all of the elements
contained in the cycles of $f$.
Assume these three components are of the form:
$$
1\rightarrow \cdots_1, \qquad 2\rightarrow \cdots_2, \qquad (m\bai \cdots_m).
$$
According to Lemma~\ref{lem:3-to-2}, the transposition action determined
by the three elements $1$, $2$ and $m$ will lead to an element $\phi(\varPsi)=(s',f')\in \mathfrak{T}_{n-\ell(\mu)-1}(D,1)$.
There are the following two cases:
\begin{itemize}
\item[(i).] If $1<_s 2 <_s m$, then after the transposition, we obtain the two components:
$$
1\rightarrow \cdots_2, \qquad 2 \rightarrow \cdots_m \rightarrow m \rightarrow \cdots_1.
$$
Following the new order $<_{s'}$, we have
$1<_{s'} m<_{s'} 2$. Furthermore, since $m$ is the minimum in all cycles before the
transposition, $2$ is smaller than all these elements contained in the cycles but $m$ w.r.t.~$<_{s'}$. Hence,
in the second component above, $m$ is the first element to the
right of $2$ which is smaller than $2$, and all elements contained in
the cycles of $f'$ are greater than $2$.
\item[(ii).] If $1<_s m <_s 2$, after the transposition, we obtain
$$
1\rightarrow \cdots_m \rightarrow m \rightarrow \cdots_2, \qquad 2 \rightarrow \cdots_1.
$$
Following the new order $<_{s'}$, we have
$1<_{s'} 2<_{s'} m$. Furthermore, $m$ is an element to the
right of $1$ which is greater than $2$, the elements in the first component between $1$ and $m$ are respectively
either smaller than $2$ or greater than $m$, and all elements contained in
the cycles of $f'$ are respectively either smaller than $2$ or greater than $m$.
\end{itemize}

Conversely, for a given element $\varPsi'=(s',f')\in \mathfrak{T}_{n-\ell(\mu)-1}(D,1)$, we can try to find its preimages under $\phi$ as follows:
\begin{description}
\item[Step~$1$.] If there are elements contained in the cycles of $f'$ which are smaller than $2$, $\varPsi'$ certainly
does not have a preimage of the case (i). It remains to check
if we can find the element $m$ in the case (ii). If not, then $\varPsi'$ has no preimage under $\phi$.
\item[Step~$2$.] If all elements in the cycles of $f'$ are greater than $2$, there are three
situations: (1) we can neither find $m$ satisfying the case (i) nor the case (ii); (2)
we can only find $m$ satisfying one of them; (3) we can find $m$ satisfying both of them, respectively.
\end{description}

\emph{Claim~$1$.} If $\varPsi'$ has a preimage from the case (i), the preimage is unique w.r.t. the case (i).\\
Note that the desired $m$ should be
the first element to the
right of $2$ that is smaller than $2$. Obviously, the first such element is unique, whence Claim~$1$.

\emph{Claim~$2$.} If $\varPsi'$ has a preimage from the case (ii), the preimage is unique w.r.t. the case (ii).\\
Suppose otherwise $m_1$ is the first element to the right of $1$ which lead to a preimage of the case (ii) and
$m_2$ is the second element which lead to a preimage of the case (ii). Obviously, the two preimages must be
different.
Since the elements between $1$ and $m_2$ are respectively either smaller than $2$ or
greater than $m_2$, we have $m_1>_{s'}m_2$. Thus, there exists $x$ between $m_1$ (included) and $m_2$ (excluded) on the path such
that $x>_{s'} f'(x)$. Then, in view of Lemma~\ref{lem:split-path},
the transposition determined by the three elements $1$, $x$ and $f'(x)$ will give
a two-row array where the vertical has $2$ more components. This contradict the fact
that the current two-row array has one less component than the maximum achievable (i.e., $n-\ell(\mu)$) in the vertical.
Hence, Claim~$2$ follows.

As a consequence, each element in $\mathfrak{T}_{n-\ell(\mu)-1}(D,1)$ has at most two preimages,
completing the proof of the inequality.
As for the sharpness, we refer to the following example.
Let $A=\{1,2,3,4\}$, $B=\{3,4,5,6\}$, and the components of $D$ are $6\rightarrow 1$,
$5 \rightarrow 2$ and $(3\bai 4)$.
Then, the two-row arrays in $\mathfrak{T}_{3}(D,1)$ are
\begin{align*}
\left(\begin{array}{cccc}
				1 & 2 & 3 &4\\
				5 & 4 & 3 &6
				\end{array}\right),\,
\left(\begin{array}{cccc}
				1 & 2 & 4 &3\\
				5 & 3 & 4 &6
				\end{array}\right),\,
\left(\begin{array}{cccc}
				1 & 3 & 4 &2\\
				4 & 3 & 5 &6
				\end{array}\right),\,
\left(\begin{array}{cccc}
				1 & 4 & 3 &2\\
				3 & 4 & 5 &6
				\end{array}\right),
\end{align*}
and there are exactly two two-row arrays in $\mathfrak{T}_{2}(D,1)$:
\begin{align*}
\left(\begin{array}{cccc}
				1 & 3 & 2 &4\\
				4 & 5 & 3 &6
				\end{array}\right),\,
\left(\begin{array}{cccc}
				1 & 4 & 2 &3\\
				3 & 5 & 4 &6
				\end{array}\right).
\end{align*}
This completes the proof.
\qed

\begin{proof}[Proof of Thorem~\ref{thm:track}]
	Suppose $E=\{a_1,a_2,\ldots, a_l\}$,
	and a long cycle $s=D\circ \pi$ where $\pi$ has $j$ cycles not containing any element from $E$.
	Then, clearly the two-row array $(s,\pi) \in \mathfrak{T}_{|\pi|}(D,1)$.
	We next replace $a_i$ in the second row of the array with $b_i \notin [n]$ for $1\leq i \leq l$
	and denote by $(s,\pi')$ the resulting array.
	It is easily seen that this replacement does not affect the cycles of $\pi$ and $D$ that
	containing no $E$-labels, and these $b_i$'s will break cycles into paths.
	Let $B=([n]\setminus E)\bigcup \{b_1,\ldots, b_l\}$.
	Consequently, $\pi'$ is a bijective map from $[n]$ to $B$ that 
	has $l$ paths and $j$ ($E$-label free) cycles, and
the diagonal $D'$ of $(s,\pi')$ is a bijective map from $B$ to $[n]$ that
consists of $l$ paths and a certain number of cycles.
Suppose $D'$ is of component-type $(\lambda, \mu)$.
	Then, it may be not hard to see that $D$ having a $E$-label free cycle of length greater than one or a cycle mixing both kinds of labels implies $2\leq \ell(\lambda)< n- \ell(\mu)$.
	Conversely, given any $(s, \pi') \in \mathfrak{T}_{k}(D',1)$ (for some $k$) where $\pi'$ has $j$ cycles, if we replace $b_i$ with $a_i$, then
	we obtain $s=D\circ \pi$ with $\pi$ having $j$ $E$-label free cycles.
	This correspondence is obviously one-to-one, and the rest of the proof should be easy to complete. 
\end{proof}

We remark that Theorem~\ref{thm:half} is not true for $\ell(\lambda)=0$ and $\ell(\lambda)=1$.
Based on some data, we also make the following general conjecture which of course has an equivalent form in the spirit of Theorem~\ref{thm:track}.
\begin{conjecture}\label{conj:1}
There exists a fixed constant $0<\delta \leq 1/2$ such that for any $(\lambda,\mu) \models n$ ($n>6$) with $\ell(\lambda)\geq 2$ and $k\geq \ell(\lambda)$,
$
W_k^{\lambda,\mu}\geq \delta \, W_{k+1}^{\lambda,\mu}.
$
\end{conjecture}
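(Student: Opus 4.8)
The plan is to prove Conjecture~\ref{conj:1} by upgrading the bounded-fiber transposition method of Theorem~\ref{thm:half} from the single top ratio to every ratio $W_{k+1}^{\lambda,\mu}/W_k^{\lambda,\mu}$ with $k\ge\ell(\lambda)$. For each such $k$ I would build a map $\phi_k:\mathfrak{T}_{k+1}(D,1)\to\mathfrak{T}_k(D,1)$ by the same canonical $3$-to-$2$ merge used there: fix the two distinguished paths starting at the labels $1$ and $2$ (available since $\ell(\lambda)\ge 2$), pick the cycle carrying the earliest-in-$<_s$ cycle element $m$, and apply the diagonal transposition of Lemma~\ref{lem:3-to-2}, which absorbs that cycle and lands in $\mathfrak{T}_k(D,1)$. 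If every $\varPsi'\in\mathfrak{T}_k(D,1)$ can be shown to have at most $C$ preimages under $\phi_k$ for a constant $C$ independent of $n$, $(\lambda,\mu)$ and $k$, then $W_{k+1}^{\lambda,\mu}\le C\,W_k^{\lambda,\mu}$, i.e.\ the conjecture holds with $\delta=1/C$.

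The case analysis carries over structurally: a preimage is recovered by locating a pinch point on one of the two distinguished paths and closing a contiguous segment back into a cycle. The branch corresponding to $1<_s 2<_s m$ stays single-valued by the reasoning of Claim~$1$, since the pinch point is forced to be the first element to the right of $2$ that is earlier than $2$ in the new order $<_{s'}$, and such first elements are unique. The whole difficulty is thus concentrated in the branch $1<_s m<_s 2$: here the admissible pinch points are precisely those $m$ on the path from $1$ for which, in $<_{s'}$, every element between $1$ and $m$ and every element lying in a cycle of $f'$ is either earlier than $2$ or later than $m$, together with the requirement that $m$ be the earliest cycle element of the reconstructed preimage. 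In Theorem~\ref{thm:half} a second such pinch point was excluded because splitting at an intervening descent via Lemma~\ref{lem:split-path} would create two extra components and exceed the top value $n-\ell(\mu)$; for $k<n-\ell(\mu)-1$ that contradiction evaporates, so several pinch points may a priori be admissible.

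The crux of the plan is therefore to bound the number of admissible pinch points in this branch by a universal constant, now that maximality is gone, and I would pursue two complementary routes. The first is a refinement of the merge: compose the raw transposition with a canonical re-rooting so that the absorbed cycle is always inserted at the position recoverable by a fixed-threshold first-crossing rule---the analogue of ``first element earlier than $2$'' that makes the other branch single-valued---forcing uniqueness here as well and giving $C=2$. The second, should a fully canonical merge resist, is a pairing argument: two admissible pinch points $m_1\neq m_2$ always expose, through Lemma~\ref{lem:split-path}, an intervening element $x$ with $x>_{s'}f'(x)$ whose split sends $\varPsi'$ to a distinguished element of $\mathfrak{T}_{k+2}(D,1)$; showing this assignment of excess preimages is bounded-to-one would bound the total excess by a constant multiple of $W_{k+2}^{\lambda,\mu}$, which one reabsorbs to close a ratio bound by downward induction on $k$, with the top case supplied by Theorem~\ref{thm:half} as base.

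I expect the main obstacle to be exactly this control of multiplicity in the difficult branch: turning the global-order ``window'' conditions on admissible $m$ into a genuinely constant bound. Those conditions couple the path from $1$ with all cycles of $f'$ at once, so a naive count of admissible records is not constant, and making either the canonical-merge refinement or the excess-pairing into $\mathfrak{T}_{k+2}(D,1)$ rigorous is where the real effort goes. As a hedge I would in parallel try to establish real-rootedness, hence log-concavity, of the component-number polynomial $\sum_k W_k^{\lambda,\mu}t^k$; log-concavity would make the ratios $W_{k+1}^{\lambda,\mu}/W_k^{\lambda,\mu}$ non-increasing in $k$, reducing the conjecture to a constant bound on the single extreme ratio at $k=\ell(\lambda)$, namely one cycle versus none, which is a far more self-contained sub-problem.
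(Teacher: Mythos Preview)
The statement you are attempting is Conjecture~\ref{conj:1}, and the paper does \emph{not} supply a proof: it is presented as an open conjecture ``based on some data,'' with Theorem~\ref{thm:half} covering only the top ratio $k=n-\ell(\mu)-1$. There is therefore no paper proof to compare against.

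Your proposal is a research outline, not a proof, and you say so yourself. The genuine gap is precisely the one you flag: in the branch $1<_s m<_s 2$, the argument of Claim~$2$ in Theorem~\ref{thm:half} relied essentially on $k+1$ being the maximum attainable component number, so that the extra split produced by Lemma~\ref{lem:split-path} is impossible. Once $k+1<n-\ell(\mu)$ that contradiction is gone, and nothing in your sketch replaces it. Your first route (a ``canonical re-rooting'' forcing uniqueness) is only a hope, with no mechanism described; your second route bounds excess preimages by a multiple of $W_{k+2}^{\lambda,\mu}$ and then appeals to downward induction, but this is circular unless you already control $W_{k+2}^{\lambda,\mu}/W_{k+1}^{\lambda,\mu}$, which is an instance of the very statement you are trying to prove. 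The log-concavity hedge is a separate conjecture of at least comparable difficulty. As it stands, the proposal identifies the right obstacle but does not overcome it; the conjecture remains open.
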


\subsection*{Statements and Declarations}

{\bf Competing Interests:} The author declares no competing interest.



\begin{thebibliography}{99}


\bibitem{bg92} F. B\'{e}dard, A. Goupil, {\em The poset of conjugacy classes and decomposition
of products in the symmetric group}, Canadian Math. Bull. 35 (1992), 152--160.

 \bibitem{separ} O. Bernardi, R. R. X. Du, A. H. Morales, R. P. Stanley,
{\em Separation probabilities for products of permutations}, Comb. Probab. Comput. 23(2) (2014), 201--222.



\bibitem{bona} M. B\'{o}na, R. Flynn, {\em The average number of block interchanges needed to sort a permutation and
	a recent result of Stanley}, { Inf. Process. Lett.} 109 (2009), 927--931.

\bibitem{bona-pittel} M. B\'{o}na, B. Pittel, {\em On the cycle structure of the product of random maximal cycles},
S\'{e}m. Lothar. Combin. 80 (2019), 1--37.

\bibitem{chen3} R. X. F. Chen, {\em A versatile combinatorial approach of studying products of long cycles in symmetric groups}, Adv. Appl. Math. 133 (2022), 102283.
\bibitem{chr1} R. X. F. Chen, C. M. Reidys, {\em Plane permutations and applications to a result of Zagier-Stanley and distances of permutations},
SIAM J. Discrete Math. 30(3) (2016), 1660--1684.
\bibitem{chen1} R. X. F. Chen, {\em A genus interpolation thereom on products involving general one-to-one maps}, submitted.

\bibitem{cff} G. Chapuy, V. F\'{e}ray, \'{E}. Fusy, {\em A simple model of trees for unicellular maps}, J. Combin. Theory Ser. A 120 (2013), 2064--2092.




\bibitem{fv} V. F\'{e}ray, E. A. Vassilieva, {\em Bijective enumeration of some colored permutations given by the product of two long cycles}, Discrete Math. 312 (2)
(2012), 279--292.

\bibitem{fr}	V. F\'{e}ray, A. Rattan, {\em On products of long cycles: short cycle dependence and separation probabilities}, J. Algebraic Comb. 42 (2015), 183--224.


\bibitem{IJ2} I. P. Goulden, D. M. Jackson, {\em The combinatorial relationship between trees, cacti and certain connection coefficients for the symmetric group}, European J. Combin. 13 (1992), 357--365.
 \bibitem{IJ1}   I. P. Goulden, D. M. Jackson, {\em Symmetric functions and Macdonald's result for top connexion coefficients in the symmetric group}, J. Algebra 166 (1994), 364--378.


\bibitem{ag} A. Goupil, G. Schaeffer, {\em Factoring $n$-cycles and counting maps of given genus},
European J. Combin. 19(7) (1998), 819--834.


\bibitem{jac} D. M. Jackson, {\em Counting cycles in permutations by group characters, with an application to a topological
problem}, Trans. Amer. Math. Soc. 299(2) (1987), 785--801.

\bibitem{lm21} J. B. Lewis, A. H. Morales, {\em Factorization problems in complex reflection groups},
Canadian J. Math. 73 (2021), 899--946.


 \bibitem{stan3} R. P. Stanley, {\em Two enumerative results on cycles of permutations}, European J. Combin. 32 (2011), 937--943.
\bibitem{stan1} R. P. Stanley, {\em Factorization of permutations into n-cycles}, Discrete Math. 37 (1981), 255--261.
\bibitem{walsh1} T. R. S. Walsh, A. B. Lehman, {\em Counting rooted maps by genus I}, J. Combin. Theory Ser. B 13 (1972), 192--218.

\bibitem{zag} D. Zagier, {\em On the distribution of the number of cycles of elements in symmetric groups},
 Nieuw Arch. Wisk. 13 (3) (1995), 489--495.










\end{thebibliography}
\end{document}